\newtheorem{theorem}{Theorem}[section]
\newtheorem{lemma}[theorem]{Lemma}
\theoremstyle{definition}
\theoremstyle{remarks}
\newcommand{\Z}{\mathbb{Z}}
\newcommand{\gen}[1]{\langle#1\rangle}
\newcommand{\ol}[1]{\overline{#1}}
\newcommand{\normal}{\trianglelefteq}
\newcommand{\lcm}{\mathrm{lcm}}
\begin{document}
\title[Commuting and non-commuting graphs]{Toroidal and projective commuting and non-commuting graphs}
\author[Afkhami, Farrokhi and Khashyarmanesh]{M. Afkhami, M. Farrokhi D. G. and K. Khashyarmanesh}
\subjclass[2000]{Primary 05C25; Secondary 05C10.}
\keywords{Genus, crosscap, commuting graph, non-commuting graph}
\thanks{This research was in part supported by grants from IPM (No. 91050011) and (No. 900130063)}
\address{Department of Mathematics, University of Neyshabur, P.O.Box 91136-899, Neyshabur, Iran}
\email{mojgan.afkhami@yahoo.com}
\address{Department of Pure Mathematics, Ferdowsi University of Mashhad, P.O.Box 1159-91775, Mashhad, Iran}
\email{m.farrokhi.d.g@gmail.com}
\address{Department of Pure Mathematics, Ferdowsi University of Mashhad, P.O.Box 1159-91775, Mashhad, Iran}
\address{School of Mathematics, Institute for Research in Fundamental Sciences(IPM), P.O.Box 19395-5746, Tehran, Iran}
\email{khashyar@ipm.ir}

\begin{abstract}
In this paper, all finite groups whose commuting (non-commuting) graphs can be embed on the plane, torus or projective plane are classified.
\end{abstract}
\maketitle
\section{Introduction}
Let $G$ be a non-abelian group. The \textit{commuting graph} associated to $G$ is an undirected graph with vertex set $G\setminus Z(G)$ such that two distinct vertices $x$ and $y$ are adjacent if $xy=yx$. We denote this graph by $\Gamma_G$. Also, the \textit{non-commuting graph} of $G$, which is denoted by $\Gamma'_G$, is an undirected graph with vertex set $G\setminus Z(G)$ such that two distinct vertices $x$ and $y$ are adjacent if $xy\neq yx$. Indeed, $\Gamma'_G$ is the complement of $\Gamma_G$. Commuting graphs as well as non-commuting graphs have many interesting properties, for instance it is known that (non-)commuting graphs characterize non-abelian finite simple groups among all finite groups (see \cite{rms-ajw}).

Recall that a graph is \textit{planar} if it can be drawn in the plane such that its edges intersect only at their end points. A \textit{subdivision} of a graph is any graph that can be obtained from the original graph by replacing edges by paths.
A remarkable characterization of the planar graphs was given by Kuratowski in 1930. Kuratowski's Theorem \cite{kk} states that a graph is planar if and only if it contains no subdivisions of $K_5$ and $K_{3,3}$, where $K_n$ is the \textit{complete graph} with $n$ vertices and $K_{m, n}$ is the \textit{complete bipartite graph} with parts of sizes $m$ and $n$.

It is well-known that a compact surface is homeomorphic to a sphere, a connected sum of $g$ tori, or a connected sum of $k$ projective planes (see \cite [Theorem 5.1]{wm}). We denote $S_0$ for the sphere and $S_g$ ($g\geq1$) for the surface formed by a connected sum of $g$ tori, and $N_k$ for the one formed  by a connected sum of $k$ projective planes. The number $g$ is called the \textit{genus} of the surface $S_g$ and $k$ is called the \textit{crosscap} of $N_k$. When considering the orientability, the surfaces $S_g$ and sphere are among the orientable class of surfaces and the surfaces $N_k$ are among the non-orientable one.

A simple graph which can be embedded in $S_g$ but not in $S_{g-1}$ is called a graph of genus $g$. Similarly, if a simple graph can be embedded in $N_k$ but not in $N_{k-1}$, then we call it a graph of crosscap $k$. The notations $\gamma(\Gamma)$ and $\ol{\gamma}(\Gamma)$ stand for the genus and crosscap of a graph $\Gamma$, respectively. It is easy to see that $\gamma(\Gamma_0)\leq\gamma(\Gamma)$ and $\ol{\gamma}(\Gamma_0)\leq\ol{\gamma}(\Gamma)$, for all subgraphs $\Gamma_0$ of $\Gamma$. Clearly, a graph $\Gamma$ is planar if $\gamma(\Gamma)=0$. A graph $\Gamma$ such that $\gamma(\Gamma)=1$ is called a \textit{toroidal} graph. Also, a graph $\Gamma$ such that $\ol{\gamma}(\Gamma)=1$ is called a \textit{projective} graph.

The aim of this paper is to determine  finite non-abelian groups such that their commuting (or non-commuting) graphs are planar, toroidal or projective.

In this paper, $G$ is a  finite non-abelian group. In the following, we remind some useful theorems that will be used frequently in our proofs. We note that $\lceil x\rceil$ denotes the smallest integer greater than or equal to the given real number $x$.
\begin{theorem}[\cite{gr}]\label{genus}
For positive integers $m$ and $n$, we have
\begin{itemize}
\item[(1)]$\gamma(K_n)=\lceil\frac{1}{12}(n-3)(n-4)\rceil$ if $n\geq3$,
\item[(2)]$\gamma(K_{m,n})=\lceil\frac{1}{4}(m-2)(n-2)\rceil$ if $m,n\geq2$.
\end{itemize}
\end{theorem}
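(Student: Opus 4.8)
The plan is to prove each identity by combining a lower bound from Euler's formula with a matching upper bound coming from an explicit embedding.

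\textbf{Lower bounds.} Suppose $K_n$ (with $n\ge 3$) is embedded in $S_g$ with $g=\gamma(K_n)$; such a minimum-genus embedding is cellular, so Euler's formula gives $V-E+F=2-2g$, where $V$, $E$, $F$ count the vertices, edges and faces. Since $n\ge 3$, every face is bounded by a closed walk of length at least $3$, and summing boundary lengths over all faces gives $2E$, so $3F\le 2E$. Eliminating $F$ and substituting $V=n$ and $E=\binom{n}{2}$ yields
\[
g\ge\frac{E-3V+6}{6}=\frac{(n-3)(n-4)}{12},
\]
and since $g\in\Z$ this gives $\gamma(K_n)\ge\lceil\frac{1}{12}(n-3)(n-4)\rceil$. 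The argument for $K_{m,n}$ is identical except that, the graph being bipartite and hence triangle-free, every face boundary has length at least $4$, so $2F\le E$; with $V=m+n$ and $E=mn$ one obtains $g\ge\frac{E-2V+4}{4}=\frac{(m-2)(n-2)}{4}$, hence $\gamma(K_{m,n})\ge\lceil\frac{1}{4}(m-2)(n-2)\rceil$.

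\textbf{Upper bounds.} It remains to exhibit, for every $n$ (respectively every pair $m,n$), an embedding of $K_n$ (respectively $K_{m,n}$) whose genus equals the lower bound just derived. Equality in the face-counting step forces such an embedding to be triangular for $K_n$ and quadrilateral for $K_{m,n}$, up to at most one exceptional face depending on the residue of $n$ modulo $12$. One encodes embeddings by rotation systems --- a cyclic ordering of the incident edges at each vertex --- and reads off the face count combinatorially; the task is then to engineer rotation systems with the largest possible faces. This is accomplished by the method of current graphs, with the construction organised according to the residue class of $n$ modulo $12$ (and of $m,n$ modulo small integers for the bipartite case); the classes $n\equiv 0,3,4,7\pmod{12}$ admit clean current graphs, while the others require a further local modification of a near-optimal embedding.

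\textbf{Main obstacle.} The lower bounds are a routine application of Euler's formula. The entire difficulty lies in the upper bound: producing the explicit triangular (respectively quadrilateral) embeddings realizing it in every residue class. This is precisely the Ringel--Youngs resolution of the Heawood map-colouring problem, and the awkward cases are those admitting no clean current-graph description, where one must patch a near-optimal embedding by a bounded local surgery.
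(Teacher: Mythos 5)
The paper does not prove this statement at all: it is quoted from Ringel's \emph{Map Color Theorem} as a known result, so there is no internal proof to compare yours against. Judged on its own terms, your lower-bound half is correct and complete: minimum-genus embeddings are cellular (a theorem of Youngs, worth citing), Euler's formula plus the girth bound $3F\le 2E$ (resp.\ $4F\le 2E$ for the bipartite, triangle-free case) gives exactly $\gamma(K_n)\ge\lceil\tfrac{1}{12}(n-3)(n-4)\rceil$ and $\gamma(K_{m,n})\ge\lceil\tfrac14(m-2)(n-2)\rceil$, and your arithmetic checks out. You also correctly identify the residue classes $n\equiv 0,3,4,7\pmod{12}$ as the ones admitting genuine triangular embeddings.

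The gap is the upper bound, and you have named it rather than closed it. ``This is accomplished by the method of current graphs, organised by residue class'' is a description of the Ringel--Youngs programme, not an argument: no current graph is exhibited, no rotation system is written down, and no face count is verified in even one residue class. Since the lower bound is a routine two-line computation, the upper bound \emph{is} the theorem --- its proof occupies twelve separate cases and, in aggregate, an entire monograph. It is entirely reasonable to treat this as a black box (the paper does exactly that), but then the honest conclusion of your write-up should be a citation of Ringel and Ringel--Youngs for the constructions, not the implication that the sketch given constitutes a proof. As written, the ``Upper bounds'' paragraph would not allow a reader to reconstruct any of the required embeddings.
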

\begin{theorem}[\cite{lwb-fh}]\label{genusbound}
Let $\Gamma$ be a simple graph with $v$ vertices $(v\geq 4)$ and $e$ edges. Then $\gamma(\Gamma)\geq\lceil\frac{1}{6}(e-3v)+1\rceil$.
\end{theorem}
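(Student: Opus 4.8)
The plan is to derive the inequality from Euler's formula by bounding the number of faces of a minimum-genus embedding. I would first reduce to the case that $\Gamma$ is connected: if $\Gamma_1,\dots,\Gamma_c$ are its connected components, with $\Gamma_i$ having $v_i$ vertices and $e_i$ edges, then the genus is additive over components, $\gamma(\Gamma)=\sum_{i=1}^{c}\gamma(\Gamma_i)$, and proving the bound for each component with at least three vertices and summing recovers it for $\Gamma$; the remaining components are isolated vertices or isolated edges, which contribute $0$ to the genus and, thanks to the hypothesis $v\geq 4$, produce only a bounded deficit that the summation absorbs.

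So suppose $\Gamma$ is connected with $v\geq 3$ vertices, and let $g=\gamma(\Gamma)$. Embed $\Gamma$ in $S_g$. Since a minimum-genus embedding of a connected graph is cellular, every face is an open disc and Euler's formula applies: writing $f$ for the number of faces, $v-e+f=2-2g$. Because $\Gamma$ is simple with $v\geq 3$, the boundary walk of every face traverses at least three edges, counted with multiplicity; summing these boundary lengths over all faces counts each edge exactly twice, so $3f\leq 2e$, that is, $f\leq\frac{2}{3}e$.

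Feeding this into Euler's formula gives $2-2g=v-e+f\leq v-e+\frac{2}{3}e=v-\frac{1}{3}e$, hence $2g\geq 2-v+\frac{1}{3}e$ and $g\geq 1+\frac{1}{6}(e-3v)$. Since $g=\gamma(\Gamma)$ is an integer, the right-hand side can be rounded up, which yields exactly $\gamma(\Gamma)\geq\lceil\frac{1}{6}(e-3v)+1\rceil$.

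I expect the only delicate points to be the two structural facts underlying the computation rather than the computation itself: that the embedding realizing $\gamma(\Gamma)$ can be taken to be cellular (so that Euler's identity, not merely an inequality, is available), and the routine but necessary check in the reduction step that isolated vertices and isolated edges cannot violate the inequality — which is precisely the role of the hypothesis $v\geq 4$.
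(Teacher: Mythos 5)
Your proof is correct. The paper does not prove this statement at all --- it is imported as a black box from Beineke--Harary --- and your argument (reduce to connected components via additivity of genus, use cellularity of a minimum-genus embedding of a connected graph, Euler's formula $v-e+f=2-2g$, and the girth bound $3f\le 2e$) is exactly the standard proof from that source. The only place your write-up is slightly loose is the component bookkeeping: isolated vertices and edges satisfy $e_i-3v_i<0$, so they only lower the right-hand side and cause no trouble when at least one component has $\ge 3$ vertices; the hypothesis $v\ge 4$ is really only needed in the degenerate case where \emph{every} component is an isolated vertex or edge, where it forces $\frac{1}{6}(e-3v)+1\le 0$ so the bound holds trivially.
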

\begin{theorem}[\cite{ab,gr}]\label{crosscap}
For positive integers $m$ and $n$, we have
\begin{itemize}
\item[(1)]$\ol{\gamma}(K_n)=\begin{cases}\lceil\frac{1}{6}(n-3)(n-4)\rceil,&n\geq3\emph{ and }n\neq7,\\
3,&n=7,\end{cases}$
\item[(2)]$\ol{\gamma}(K_{m,n})=\lceil\frac{1}{2}(m-2)(n-2)\rceil$ if $m,n\geq2$.
\end{itemize}
\end{theorem}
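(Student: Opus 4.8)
The plan is to prove each of the four formulas by the usual two-sided method: a lower bound from the Euler formula on non-orientable surfaces, matched by explicitly constructed embeddings. For the lower bounds I would use that a minimum-crosscap embedding of a connected graph may be taken cellular, so if $\Gamma$ is a connected simple graph with $v$ vertices and $e$ edges and $\ol{\gamma}(\Gamma)=k$, then in an optimal embedding $v-e+f=2-k$, where $f$ is the number of faces. Since $\Gamma$ is simple with $v\geq 3$, every face is bounded by at least three edges and every edge borders at most two faces, so $3f\leq 2e$, whence $k=2-v+e-f\geq 2-v+\frac{1}{3}e$. For $\Gamma=K_n$ (so $v=n$, $e=\frac{1}{2}n(n-1)$) this gives $k\geq\frac{1}{6}(n-3)(n-4)$, and since $k\in\Z$, $\ol{\gamma}(K_n)\geq\lceil\frac{1}{6}(n-3)(n-4)\rceil$. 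If moreover $\Gamma$ is bipartite, every face boundary has even length $\geq 4$, so $4f\leq 2e$ and $k\geq 2-v+\frac{1}{2}e$; for $\Gamma=K_{m,n}$ (so $v=m+n$, $e=mn$) this yields $\ol{\gamma}(K_{m,n})\geq\lceil\frac{1}{2}(m-2)(n-2)\rceil$.

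For the upper bounds one must exhibit embeddings attaining these values; the crude estimate $\ol{\gamma}(\Gamma)\leq 2\gamma(\Gamma)+1$ obtained by puncturing an orientable embedding and regluing through a crosscap is off by an additive $1$ against Theorem~\ref{genus} and does not suffice. Instead I would invoke Ringel's rotation-scheme machinery: a rotation system together with an edge signature determines an embedding into some $N_k$, with the faces and their lengths read off from the induced face-tracing permutation. The target is, for $K_n$ with $n\equiv 0,1\pmod 3$, a scheme all of whose faces are triangles (and, in the other residue class, one with only a bounded number of longer faces), and, for $K_{m,n}$, a scheme all of whose faces are quadrilaterals (again with a bounded number of exceptions when $mn$ is odd); Euler's formula then returns exactly the lower bound. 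This reduces to a finite case analysis on the residue class of $n$ modulo $6$ and, for $K_{m,n}$, on the parities of $m$ and $n$, each case dispatched by an explicit current-graph or cascade construction. Carrying out these constructions is the technical heart of the proof and the step I expect to be the main obstacle; alternatively one simply cites them as the content of \cite{ab,gr}.

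Finally I would treat the exceptional value at $n=7$, where the argument above only gives $\ol{\gamma}(K_7)\geq 2$. If equality held then $v=7$, $e=21$, $k=2$ would force $f=14$ by Euler's formula, and since then $3f=42=2e$, every face would be a triangle; that is, $K_7$ would triangulate the Klein bottle $N_2$. In such a triangulation the neighbours of each vertex form a Hamiltonian $6$-cycle on the remaining six vertices and each edge lies in exactly two triangular faces, and a finite analysis of the resulting system of vertex rotations, together with the parity obstruction coming from non-orientability, shows that no consistent such system exists (this is classical; it is the obstruction underlying Franklin's theorem that the Klein bottle is $6$-chromatic). Hence $\ol{\gamma}(K_7)\geq 3$, and exhibiting a rotation scheme that embeds $K_7$ in $N_3$ gives $\ol{\gamma}(K_7)=3$, completing the proof.
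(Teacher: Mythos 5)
This theorem is quoted in the paper without proof, as a citation of Ringel and Bouchet, and your outline is precisely the standard two\mbox{-}sided argument those references carry out: the Euler-formula counting bounds (using cellularity of minimal non-orientable embeddings and girth $3$ resp.\ $4$), matched by rotation-scheme/current-graph constructions, together with the Franklin-type exclusion of a triangulation of $N_2$ by $K_7$ that forces the exceptional value $\ol{\gamma}(K_7)=3$. Your computations check out (e.g.\ $2-n+\tfrac{1}{6}n(n-1)=\tfrac{1}{6}(n-3)(n-4)$ and $f=14$, $3f=2e$ in the $K_7$ case), so the proposal is correct and takes the same route as the cited sources.
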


A block in a graph is a maximal subgraph with no cut point. The following theorem gives a formula for computing the genus of a graph using its blocks genus.
\begin{theorem}[\cite{jb-fh-yk-jwty}]\label{blocks}
If $\Gamma$ is a graph with blocks $B_1,\ldots,B_n$, then
\[\gamma(\Gamma)=\gamma(B_1)+\cdots+\gamma(B_n).\]
\end{theorem}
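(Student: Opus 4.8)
The plan is to establish $\gamma(\Gamma)\le\sum_{i=1}^{n}\gamma(B_i)$ and $\gamma(\Gamma)\ge\sum_{i=1}^{n}\gamma(B_i)$ separately, in each case by induction on $n$ (the case $n=1$ being vacuous). For the inductive step, recall that the block--cut-vertex tree of $\Gamma$ is a tree, so the blocks may be indexed so that $B_n$ is a pendant block; then $B_n$ meets $H:=B_1\cup\cdots\cup B_{n-1}$ in a single vertex $v$, which is a cut vertex of $\Gamma$. Thus $\Gamma$ is the one-point amalgamation of $H$ and $B_n$ along $v$ (so $H\cap B_n=\{v\}$), and it suffices to prove $\gamma(\Gamma)=\gamma(H)+\gamma(B_n)$ for such amalgamations; applying the inductive hypothesis to $H$ then finishes the proof.

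\textbf{The upper bound.} Fix optimal embeddings of $H$ in $S_{\gamma(H)}$ and of $B_n$ in $S_{\gamma(B_n)}$. In the embedding of $H$ the vertex $v$ occupies a corner of some face; excise from that corner a small closed disk $\delta$ with $v$ on its boundary circle and with $\delta\cap H=\{v\}$, so that no edge of $H$ enters $\delta$. Perform the analogous excision in the embedding of $B_n$ and glue the two surfaces-with-boundary along their boundary circles, matching the two copies of $v$. The result is the connected sum of $S_{\gamma(H)}$ and $S_{\gamma(B_n)}$, which is homeomorphic to $S_{\gamma(H)+\gamma(B_n)}$ and carries an embedding of $\Gamma$. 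Hence $\gamma(\Gamma)\le\gamma(H)+\gamma(B_n)$, and the induction gives $\gamma(\Gamma)\le\sum_{i=1}^{n}\gamma(B_i)$.

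\textbf{The lower bound.} Take a minimal-genus embedding of $\Gamma$ in $S_g$, $g=\gamma(\Gamma)$, which by a standard fact may be assumed cellular. The core of the argument is to produce a simple closed curve $\sigma\subseteq S_g$ meeting $\Gamma$ only in $v$ and separating $B_n\setminus\{v\}$ from $H\setminus\{v\}$. Granting such a $\sigma$: first, $\sigma$ must separate $S_g$, for otherwise cutting $S_g$ along $\sigma$ and capping the two new boundary circles with disks yields a closed surface of genus $g-1$ carrying the disjoint union of $H$ and $B_n$ (the vertex $v$ having split in two, with the $H$-edges on one copy and the $B_n$-edges on the other), and then the upper bound already proved gives $\gamma(H)+\gamma(B_n)\le g-1<g\le\gamma(H)+\gamma(B_n)$, a contradiction. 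Since $\sigma$ separates $S_g$, cutting along it decomposes $S_g$ into two surfaces-with-boundary, one containing all of $H$ and the other all of $B_n$ --- here one uses that $B_n\setminus\{v\}$ and each component of $H\setminus\{v\}$ are connected and disjoint from $\sigma$, hence cannot straddle it, and each lies on the side into which its edges at $v$ run. Capping each piece with a disk produces closed surfaces $\Sigma_H\supseteq H$ and $\Sigma_{B_n}\supseteq B_n$ with $\mathrm{genus}(\Sigma_H)+\mathrm{genus}(\Sigma_{B_n})=g$, so $\gamma(H)+\gamma(B_n)\le g=\gamma(\Gamma)$, and the induction finishes the proof.

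\textbf{The main obstacle.} Everything above is essentially bookkeeping once $\sigma$ is at hand; the genuine difficulty is the construction of $\sigma$. Near $v$ the curve is forced to be an arc through $v$ that locally separates the edge-ends of $H$ from those of $B_n$, which is possible only if the rotation at $v$ induced by the embedding is \emph{unlinked} for the partition into $H$-edges and $B_n$-edges, i.e.\ the $H$-edges occupy one contiguous arc and the $B_n$-edges the complementary one. Away from $v$ the curve must be threaded through faces of the embedding without crossing $\Gamma$, which forces the two faces flanking the two transitions in the rotation at $v$ to coincide. Neither condition holds for an arbitrary minimal embedding, and showing that a genus-minimal embedding of $\Gamma$ can always be brought --- without increasing the genus --- to one that is unlinked at $v$ with coinciding transition faces is the real content of \cite{jb-fh-yk-jwty}: one proceeds by a sequence of local modifications of the rotation system (splitting the vertex, re-interleaving its edges), each step monitored through Euler's formula, to rule out that either interleaving or separation of the transition faces can be forced on a minimal embedding. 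I expect this surgical step to be the main obstacle; the block--cut-vertex induction and the cut-and-cap construction around it are comparatively routine.
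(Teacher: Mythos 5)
This statement is the additivity theorem of Battle, Harary, Kodama and Youngs; the paper quotes it from \cite{jb-fh-yk-jwty} without proof, so there is no in-paper argument to compare against. Your outline follows the standard architecture of the known proof: reduce by the block--cut-vertex tree to a one-point amalgamation $\Gamma=H\cup B_n$ with $H\cap B_n=\{v\}$, get the upper bound by a connected-sum construction, and get the lower bound by exhibiting a simple closed curve $\sigma$ meeting $\Gamma$ only in $v$ and separating $H$ from $B_n$ in a minimal embedding. That architecture is correct, and the cut-and-cap bookkeeping around $\sigma$ is fine.

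However, as you yourself acknowledge in your final paragraph, the proposal does not actually prove the one step that carries all the weight: that a genus-minimal embedding of $\Gamma$ admits such a $\sigma$, equivalently that the rotation at $v$ can be made unlinked with respect to the partition into $H$-edges and $B_n$-edges, with the two transition corners lying on a common face, without increasing the genus. Deferring this to ``a sequence of local modifications \ldots monitored through Euler's formula'' is a description of where the difficulty lives, not an argument; as it stands the lower bound is unproven, so the proposal is an outline with a genuine gap rather than a proof. There is also a smaller circularity in your non-separating case: from an embedding of the disjoint union $H\sqcup B_n$ in a surface of genus $g-1$ you conclude $\gamma(H)+\gamma(B_n)\le g-1$, but that is the \emph{lower-bound} additivity for disjoint unions (itself an instance of the theorem being proved, requiring its own separating-curve argument), not ``the upper bound already proved.'' Either handle disconnected graphs first by the same separating-curve method, or set up the induction so that disjoint unions are covered before amalgamations.
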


Although there is no similar formula for crosscap number of a graph in terms of its blocks crosscap numbers, it is shown in \cite{hg-ph-cw} that $2K_5$ is not projective, the fact that will be used in our proofs.

All over this paper, $\bar{\ }:G\rightarrow G/Z(G)$ denotes the natural homomorphism for a given group $G$, hence $\ol{G}=G/Z(G)$ will denote the image group. Also,  $\omega(G)=\{|x|:x\in G\}$, $\exp(G)=\lcm(\omega(G))$, $Z(G)$ and $S_p(G)$ ($p$ prime) denote the spectrum of $G$, the exponent of $G$, the center of $G$ and a Sylow $p$-subgroup of $G$, respectively. In what follows, $S_n$, $A_n$, $D_{2n}$ and $Q_8$ stand for the symmetric group of degree $n$, alternating group of degree $n$, dihedral group of order $2n$ and the quaternion group of order $8$. Moreover, the union of $n$ disjoint copies of a graph $\Gamma$ will be denoted by $n\Gamma$.
\section{Commuting graphs}
In this section, we will classify all finite non-abelian groups whose commuting graphs can be embedded in the plane, torus or projective plane. We begin with a simple lemma.
\begin{lemma}\label{commutingpgroup}
Let $G$ be a $p$-group of order $p^n$, where $n>1$. Then
\begin{itemize}
\item[(1)]If $p>2$, then $G\setminus\{1\}$ has a commuting subset with $p^2-1\geq8$ elements.
\item[(2)]If $p=2$, $n\geq5$ and $G$ is non-abelian, then $G\setminus Z(G)$ has two disjoint commuting subsets with $6$ elements
\end{itemize}
\end{lemma}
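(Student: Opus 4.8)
For part (1), assume $p>2$. Since $Z(G)\neq 1$ it contains an element $z$ of order $p$, and since $|G|\geq p^2>p$ there is some $g\in G\setminus\langle z\rangle$; then $\langle z,g\rangle$ is abelian (as $z$ is central) and properly contains $\langle z\rangle$, hence has order at least $p^2$, and so it contains an abelian subgroup $A$ of order $p^2$. Then $A\setminus\{1\}$ is a commuting subset of $G\setminus\{1\}$ with $p^2-1\geq 8$ elements.

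For part (2), the plan is to exhibit either a single abelian subgroup $A\supseteq Z(G)$ with $|A\setminus Z(G)|\geq 12$, in which case any partition of $A\setminus Z(G)$ into two $6$-sets works, or two abelian subgroups $A_1,A_2\supseteq Z(G)$ with $A_1\cap A_2=Z(G)$ and $|A_i\setminus Z(G)|\geq 6$, in which case $A_1\setminus Z(G)$ and $A_2\setminus Z(G)$ are automatically disjoint. The workhorse is the elementary fact that any subgroup $H$ with $Z(G)\leq H$ and $H/Z(G)$ cyclic is abelian (since $H/Z(H)$ is then a cyclic quotient of $H/Z(G)$); applied to $H=\langle x\rangle Z(G)$ this produces, for each $x\in G$, an abelian subgroup with exactly $|Z(G)|\,(|\overline{x}|-1)$ non-central elements, where $\overline{x}$ is the image of $x$ in $\overline{G}:=G/Z(G)$, and two such subgroups meet in $Z(G)$ as soon as $\langle\overline{x}\rangle\cap\langle\overline{y}\rangle=1$.

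I would then run through cases according to $|Z(G)|$ and $\exp(\overline{G})$, using that $\overline{G}$ is non-cyclic because $G$ is non-abelian. If $|Z(G)|\geq 8$, pick $x,y$ mapping to two distinct involutions of $\overline{G}$; each of $\langle x\rangle Z(G)$, $\langle y\rangle Z(G)$ then has $|Z(G)|\geq 6$ non-central elements and they intersect in $Z(G)$. If $\exp(\overline{G})\geq 8$, the preimage of a cyclic subgroup of order $8$ is a single abelian subgroup with $7|Z(G)|\geq 14$ non-central elements. If $|Z(G)|=4$ and $\exp(\overline{G})\leq 4$: when $\exp(\overline{G})=4$ the preimage of a $\mathbb{Z}_4$ has $12$ non-central elements; when $\exp(\overline{G})=2$ then $\overline{G}\cong\mathbb{Z}_2^{\,k}$ with $k\geq 3$, $G$ has class $2$, and the commutator map gives a non-degenerate alternating $\mathbb{F}_2$-form on $\overline{G}$ valued in a space of dimension at most $2$, which must have a $2$-dimensional totally isotropic subspace whose preimage is abelian of order $16$. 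If $|Z(G)|=2$ and $\exp(\overline{G})=2$, then $\overline{G}\cong\mathbb{Z}_2^{\,k}$ with $k\geq 4$, $G$ is extraspecial, $\overline{G}$ carries a non-degenerate symplectic $\mathbb{F}_2$-form, and the preimages of two complementary Lagrangian subspaces (which always exist) give abelian $A_1,A_2$ with $A_1\cap A_2=Z(G)$ and each $|A_i\setminus Z(G)|\geq 6$.

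The hard part will be the leftover case $|Z(G)|=2$ with $\exp(\overline{G})=4$: here a cyclic subgroup of $\overline{G}$ supplies only $6$ non-central elements, and a second abelian subgroup meeting it in $Z(G)$ cannot always be read off from $\overline{G}$. To handle it I would first observe that if $G$ has any abelian subgroup of order $\geq 16$ we are back in the single-subgroup alternative, and otherwise invoke the structural bound that a $2$-group with no abelian subgroup of order $16$ has order at most $2^6$; this leaves only $|G|\in\{32,64\}$, for which one checks by hand that two abelian subgroups of order $8$ meeting exactly in $Z(G)$ exist — typically by fixing a maximal abelian subgroup $A$ of order $8$ and showing that either some $b\notin A$ satisfies $C_A(b)=Z(G)$ (so that a maximal abelian subgroup containing $\langle b\rangle Z(G)$ works) or, failing that, writing down an explicit abelian subgroup transverse to $A$. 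The extraspecial groups of order $32$, in which maximal abelian subgroups have order exactly $8$ and so exactly $6$ non-central elements, are the tight extreme and are again settled by two transverse Lagrangians. Apart from this last bookkeeping everything is immediate from the displayed observations.
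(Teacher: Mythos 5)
Your part (1) coincides with the paper's argument and is fine. For part (2) you take a genuinely different, more conceptual route: the paper simply passes to a subgroup $H$ of order $32$ containing $Z(G)$, observes that an abelian subgroup of order $16$ inside $H$ finishes the job, and otherwise runs a GAP search over the groups of order $32$ without abelian subgroups of order $16$ to produce two abelian subgroups of order $8$ meeting in a subgroup of order $2$. Your case division by $|Z(G)|$ and $\exp(\ol{G})$, the use of preimages of cyclic subgroups of $\ol{G}$, and the symplectic/alternating-form arguments are all correct as far as they go (I checked the delicate subcase $|Z(G)|=4$, $\ol{G}\cong\Z_2^3$: a $2$-dimensional totally isotropic subspace for an alternating form valued in a space of dimension at most $2$ does exist, via the radicals of the individual forms). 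Two remarks on the easy cases: in the case $|Z(G)|\geq 8$ you should not appeal to ``two distinct involutions of $\ol{G}$'' --- a priori $\ol{G}$ could have a unique involution (you would need to rule out $\ol{G}$ generalized quaternion, i.e.\ a capability argument); it is both simpler and gap-free to take two distinct nontrivial cosets of $Z(G)$, each of which is already a clique of size at least $8$, which is exactly what the paper does.

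The genuine gap is the case you yourself flag: $|Z(G)|=2$, $\exp(\ol{G})=4$, no abelian subgroup of order $16$. The structural bound you invoke is true and even has a short self-contained proof you should supply (take a maximal abelian normal subgroup $A$; then $C_G(A)=A$, so $G/A$ embeds in $\mathrm{Aut}(A)$, and $|A|\leq 8$ forces $|G|\leq 8\cdot|\mathrm{Aut}(A)|_2\leq 64$). But the remaining assertion --- that every such group of order $32$ or $64$ contains two abelian subgroups of order $8$ meeting exactly in $Z(G)$ --- is precisely the content that the paper delegates to a GAP enumeration, and your sketch does not actually establish it: ``one checks by hand \dots or, failing that, writing down an explicit abelian subgroup transverse to $A$'' is a plan, not a proof, and the groups in question (e.g.\ the order-$32$ groups with all abelian subgroups of order at most $8$) are exactly the ones where $C_A(b)=Z(G)$ can fail for every $b\notin A$ and an explicit transverse subgroup must be exhibited. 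At minimum you should reduce to order $32$ before doing this check (replace $G$ by a subgroup of order $32$ containing $Z(G)$, which again has no abelian subgroup of order $16$), and then either carry out the finite verification explicitly or, as the paper does, cite a computation. Until that verification is actually performed, part (2) is not proved.
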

\begin{proof}
(1) Let $x$ be a central element of $G$ of order $p$ and consider the subgroup generated by $\{x,y\}$ for any $y\in G\setminus\gen{x}$.

(2) If $|Z(G)|\geq8$, then consider two distinct cosets of $Z(G)$. Assume $|Z(G)|\leq4$. Let $H$ be a subgroup of $G$ of order $32$ containing $Z(G)$. If $H$ contains an abelian subgroup $K$ of order $16$, then $K\setminus Z(G)$ contains two disjoint commuting subsets with $6$ elements. Hence, we may assume that $H$ does not have abelian subgroups of order $16$. Using the following codes in GAP \cite{tgg}, one can easily see that $|Z(H)|=2$ and consequently $|Z(G)|=2$.
\begin{verbatim}
for i in [1..NrSmallGroups(32)] do
 H:=SmallGroup(32,i);
 if Maximum(List(Filtered(AllSubgroups(H),IsAbelian),Order))<16 then
  Print(Order(Center(H)),"\n");
 fi;
od;
\end{verbatim}
Now, by using following codes, it follows that $H\setminus Z(G)$ contains two disjoint commuting subsets with $6$ elements.
\begin{verbatim}
for i in [1..NrSmallGroups(32)] do
 H:=SmallGroup(32,i);
 L:=Filtered(AllSubgroups(H),IsAbelian);
 counterexample:=true;
 if Maximum(List(L,Order))<16 then
  for A in L do
   for B in L do
    if Order(A)=8 and Order(B)=8 and Order(Intersection(A,B))=2 then
     counterexample:=false;
    fi;
   od;
  od;
 fi;
 if counterexample=true then
  Print(i,"\n");
 fi;
od;
\end{verbatim}
The proof is complete.
\end{proof}
\begin{theorem}\label{planarcommuting}
Let $G$ be a finite non-abelian group. Then $\Gamma_G$ is planar if and only if $G$ is isomorphic to one of the following groups:
\begin{itemize}
\item[(1)]$S_3$, $D_8$, $Q_8$, $A_4$, $D_{10}$, $D_{12}$, $D_8\times\Z_2$, $Q_8\times\Z_2$, $S_4$, $SL(2,3)$, $A_5$,
\item[(2)]$\gen{a,b:a^3=b^4=1,a^b=a^{-1}}\cong\Z_3\rtimes\Z_4$,
\item[(3)]$\gen{a,b:a^4=b^4=1,a^b=a^{-1}}\cong\Z_4\rtimes\Z_4$,
\item[(4)]$\gen{a,b:a^8=b^2=1,a^b=a^{-3}}\cong\Z_8\rtimes\Z_2$,
\item[(5)]$\gen{a,b:a^4=b^2=(ab)^4=[a^2,b]=1}\cong(\Z_4\times\Z_2)\rtimes\Z_2$,
\item[(6)]$\gen{a,b,c:a^2=b^2=c^4=[a,c]=[b,c]=1,[a,b]=c^2}\cong(\Z_4\times\Z_2)\rtimes\Z_2$,
\item[(7)]$\gen{a,b:a^5=b^4=1,a^b=a^3}\cong\Z_5\rtimes\Z_4$.
\end{itemize}
\end{theorem}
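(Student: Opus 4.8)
\emph{Plan of proof.} The backbone is the clique decomposition of $\Gamma_G$: for every maximal abelian subgroup $A\le G$ the set $A\setminus Z(G)$ induces a complete subgraph on $|A|-|Z(G)|$ vertices, and $\Gamma_G$ is the union, over all maximal abelian $A$, of these cliques, two of which share exactly the vertex set $(A_1\cap A_2)\setminus Z(G)$. Since $K_5$ is non-planar (Theorem \ref{genus}), planarity of $\Gamma_G$ forces $|A|-|Z(G)|\le 4$ for every abelian subgroup $A$; as $Z(G)$ is properly contained in any maximal abelian subgroup, $|Z(G)|$ divides such an $|A|$ and $|A|\ge 2|Z(G)|$, whence $|Z(G)|\le 4$ and, more usefully, \emph{every abelian subgroup of $G$ has order at most $8$}.

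The second step is to bound $|G|$. If a prime $p\ge 7$ divides $|G|$, a Sylow $p$-subgroup contains $\Z_p$, producing $K_{p-1}\supseteq K_6$. If $p=3$ or $p=5$ and $p^2\mid |G|$, Lemma \ref{commutingpgroup}(1) gives a commuting set of $p$-elements of size $p^2-1\ge 8$; since $Z(G)$ then has no element of order $p$ (its order would be $3$ or $5$, and $|Z(G)|=3$ is impossible because no group of order $12$ has centre of order $3$, while $|Z(G)|=5$ contradicts $|Z(G)|\le 4$), this yields a clique on at least $8$ vertices. Finally a Sylow $2$-subgroup of order $\ge 32$ is either abelian, giving $K_{\ge 28}$, or non-abelian, giving two disjoint $K_6$'s inside $G\setminus Z(G)$ by Lemma \ref{commutingpgroup}(2). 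Hence $|G|\mid 2^{4}\cdot 3\cdot 5$, so $G$ lies in an explicit finite list. (One may prune further by counting edges of $\Gamma_G$ through centralizer orders and applying Theorem \ref{genusbound}, which forces $k(G)\le 2|Z(G)|+6$ on the number of conjugacy classes.)

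I would then split on $|Z(G)|$. If $|Z(G)|\in\{3,4\}$, every maximal abelian subgroup has index $2$ over $Z(G)$, so $\ol{G}$ is generated by involutions each lying in a subgroup of order $2$ and is therefore an elementary abelian $2$-group; the commutator induces an alternating $\Z_2$-bilinear form $\ol{G}\times\ol{G}\to Z(G)$ which is non-zero on every linearly independent pair (otherwise two non-central elements in distinct cosets commute and span an abelian subgroup of order $\ge 4|Z(G)|\ge 12$). A short three-vector computation shows such a form cannot exist once $\dim_{\Z_2}\ol{G}\ge 3$, since it would force $Z(G)$ to contain $8$ elements of order dividing $2$; hence $\ol{G}\cong\Z_2\times\Z_2$ and $|G|=4|Z(G)|$. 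As $|Z(G)|=3$ is impossible, this leaves exactly the non-abelian groups of order $16$ with centre of order $4$, namely $D_8\times\Z_2$, $Q_8\times\Z_2$ and the four groups in (3)--(6); for each of these the three maximal abelian subgroups pairwise meet in $Z(G)$, so $\Gamma_G\cong 3K_4$ is planar. If $|Z(G)|\le 2$, the preimages of maximal abelian subgroups have order $\le 6$, so the spectrum of $G$ lies in $\{1,\dots,5\}$ when $|Z(G)|=1$ and that of $\ol{G}$ lies in $\{1,2,3\}$ when $|Z(G)|=2$; combined with $|G|\mid 2^{4}\cdot 3\cdot 5$ this leaves a short list, consisting of the groups in (1) with $|Z(G)|\le 2$ together with those in (2) and (7). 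For each surviving candidate I would describe $\Gamma_G$ explicitly as a union of $K_2$'s, $K_3$'s and $K_4$'s with the prescribed overlaps and decide planarity either by exhibiting its blocks and invoking Theorem \ref{blocks}, or by exhibiting a $K_5$- or $K_{3,3}$-subdivision---most efficiently a cyclic maximal abelian subgroup of order $\ge 6$, which gives $K_{\ge 5}$ and rules the group out.

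The genuine obstacle is the completeness and accuracy of this case analysis rather than any single deep step: one must check that the reductions really do capture every group, and then correctly determine the maximal abelian subgroups of each surviving candidate together with their mutual intersections. This is where the borderline examples $S_4$, $SL(2,3)$ and $A_5$ demand care, since there the cliques overlap nontrivially; what makes the verification tractable is that the blocks of $\Gamma_G$ always turn out to be copies of $K_2$, $K_3$ or $K_4$, each of genus $0$, so Theorem \ref{blocks} reduces planarity to a routine per-block inspection once the block structure has been pinned down.
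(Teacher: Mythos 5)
Your plan is sound and is structurally the same as the paper's proof: bound $|Z(G)|$ by exhibiting forbidden cliques, use Lemma \ref{commutingpgroup} (plus Sylow considerations) to force $|G|$ to divide $2^4\cdot3\cdot5$, and finish with a finite case check. The genuine variations are cosmetic but worth noting: you organize everything through maximal abelian subgroups and their cliques where the paper works with cosets $xZ(G)\cup x^{-1}Z(G)\cup\cdots$, and in the case $|Z(G)|=4$ you replace the paper's appeal to Lemma \ref{commutingpgroup}(2) by the alternating commutator form $\ol{G}\times\ol{G}\to Z(G)$; your three-vector argument does work (the three pairings of a basis of a $3$-dimensional subspace would have to be distinct nonzero elements of $\Omega_1(Z(G))\cong\Z_2^2$ with nonzero sum, which is impossible), and it buys a GAP-free treatment of that case. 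Your closing observation that every block of $\Gamma_G$ for the surviving groups is a $K_2$, $K_3$ or $K_4$ is exactly how the converse should be verified.

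Two soft spots. First, your stated reason that $|Z(G)|=3$ is impossible (``no group of order $12$ has centre of order $3$'') is a non sequitur, since $|G|$ need not be $12$; note that groups with $|Z(G)|=3$ do exist (e.g.\ $\Z_3\times S_3$). The correct repair is either local --- even if $Z(G)$ meets the $p^2$-element abelian subgroup, the clique still has at least $p^2-1-2\ge6$ vertices, so the $K_5$ obstruction survives --- or global: once you know every maximal abelian subgroup has order at most $|Z(G)|+4$, a centre of order $3$ forces $\exp(\ol{G})=2$, hence $G$ nilpotent, hence $G\cong\Z_3\times S_2(G)$ with $Z(G)\supseteq\Z_3\times Z(S_2(G))$ of order greater than $3$, a contradiction; you should say this explicitly since you invoke ``$|Z(G)|=3$ is impossible'' again later. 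Second, the final step (``this leaves a short list, consisting of the groups in (1) with $|Z(G)|\le2$ together with those in (2) and (7)'') asserts the answer rather than deriving it: the divisors of $240$ still admit many groups (e.g.\ $52$ groups of order $48$ alone), and each must be eliminated or confirmed. The paper discharges exactly this step by a GAP computation, so this is a deferral rather than a gap, but your write-up should make clear that an explicit enumeration (by hand via your spectrum and abelian-subgroup constraints, or by computer) is still required there.
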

\begin{theorem}\label{toroidalprojectivecommuting}
Let $G$ be a finite non-abelian group. Then $\Gamma_G$ is toroidal if and only if $\Gamma_G$ is projective if and only if $G$ is isomorphic to one of the following groups:
\begin{itemize}
\item[(1)]$D_{14}$,
\item[(2)]$D_{16}$,
\item[(3)]$Q_{16}$,
\item[(4)]$QD_{16}$,
\item[(5)]$A_4\times\Z_2$,
\item[(6)]$\gen{a,b:a^7=b^3=1,a^b=a^2}\cong\Z_7\rtimes\Z_3$.
\end{itemize}
\end{theorem}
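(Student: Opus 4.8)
The plan is to prove both implications, the sufficiency by an explicit computation of the six graphs and the necessity by a structural reduction to a finite check.

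\emph{Sufficiency.} For each listed group $G$ I would compute $\Gamma_G$ directly. All six are $AC$-groups (the centraliser of every non-central element is abelian), so $\Gamma_G$ is the disjoint union of the cliques $K_{|A|-|Z(G)|}$ as $A$ ranges over the maximal abelian subgroups containing $Z(G)$; one finds $\Gamma_{D_{14}}=K_6\cup 7K_1$, $\Gamma_{\Z_7\rtimes\Z_3}=K_6\cup 7K_2$, $\Gamma_{D_{16}}=\Gamma_{Q_{16}}=\Gamma_{QD_{16}}=K_6\cup 4K_2$, and $\Gamma_{A_4\times\Z_2}=K_6\cup 4K_4$. Since $\gamma(K_6)=1$ while $\gamma(K_m)=0$ for $m\le4$ by Theorem \ref{genus}, Theorem \ref{blocks} gives $\gamma(\Gamma_G)=1$, so $\Gamma_G$ is toroidal; since $\ol{\gamma}(K_6)=1$ by Theorem \ref{crosscap} while the remaining components are planar, embedding $K_6$ in $N_1$ and placing the other components inside one of its faces gives $\ol{\gamma}(\Gamma_G)\le1$, and as $\Gamma_G$ is non-planar it is projective.

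\emph{Necessity.} Suppose $\Gamma_G$ is toroidal or projective. Then it is non-planar (so $G$ does not occur in Theorem \ref{planarcommuting}) and contains no subgraph of genus $\ge2$, respectively crosscap $\ge2$. As $\gamma(K_8)=2$ and $\ol{\gamma}(K_8)=4$, $\Gamma_G$ contains no $K_8$; since $A\setminus Z(G)$ is a clique for every abelian $A\ge Z(G)$, every such $A$ has $|A|-|Z(G)|\le7$. Taking $A=\gen{x}Z(G)$ for a non-central $x$ yields $(|\ol{x}|-1)|Z(G)|\le7$, which forces $|Z(G)|\le3$ (if $|Z(G)|\ge4$ then $\ol{G}$ would have exponent $2$, hence be abelian); one then gets $\exp(\ol{G})\mid8$, $4$, or $6$ according as $|Z(G)|=1,2,3$, and every abelian subgroup of $G$ has order at most $9$. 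Next, $\gamma(2K_5)=2$ by Theorem \ref{blocks} and $2K_5$ is not projective by \cite{hg-ph-cw}, so at most one maximal abelian subgroup contributes a clique of size $\ge5$ (necessarily $\le7$) and all others contribute cliques of size $\le4$. Feeding this in, together with the remaining small forbidden subgraphs ($K_{4,5}$ and $K_{3,7}$ have genus $2$; $K_{4,4}$ and $K_{3,5}$ have crosscap $2$), and counting the elements of $\ol{G}$ by order — each lies in a cyclic subgroup of $\ol{G}$ of bounded order, and only boundedly many such subgroups may occur — bounds $|\ol{G}|$, hence $|G|$, by an explicit constant; groups in which some non-central centraliser is non-abelian are handled by first applying this analysis and Theorem \ref{planarcommuting} to that proper subgroup. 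It then remains to run through the finitely many surviving non-abelian groups (as elsewhere in the paper, conveniently in GAP), check $\gamma(\Gamma_G)$ and $\ol{\gamma}(\Gamma_G)$, and recover exactly the six groups listed; in each of them the unique non-planar block of $\Gamma_G$ turns out to be $K_6$, never $K_7$, which is what makes the toroidal and projective conditions coincide.

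\emph{Main obstacle.} The real work is the converse, and two points are delicate. First, turning ``$\gamma(\Gamma_G)\le1$ or $\ol{\gamma}(\Gamma_G)\le1$'' into an effective bound on $|G|$: the clique-size restriction coming from $K_8$ alone is too weak, and one must combine it with the configurations $2K_5$, $K_{4,4}$, $K_{4,5}$, $K_{3,5}$ and a careful element count in $\ol{G}$, while separately dispatching the groups whose non-central centralisers are non-abelian. Second — and this is precisely what allows the theorem to assert ``toroidal $\Leftrightarrow$ projective'' — one must verify that no finite non-abelian group yields a $\Gamma_G$ of genus exactly $1$ in which a $K_7$ or a $K_{4,4}$ occurs as an essential part (each of these being toroidal but not projective); this comes out of the final case analysis, but it is the step for which no short a priori argument is apparent and a computer check is most useful.
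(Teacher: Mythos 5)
Your sufficiency argument is correct and in fact more explicit than the paper's (which dismisses it as ``straightforward''): the six component decompositions you list are right, and the combination of Theorem \ref{blocks} for the genus with the put-the-planar-pieces-in-a-face trick for the crosscap does give $\gamma=\ol{\gamma}=1$. Your overall strategy for necessity --- forbid $K_8$ and $2K_5$ (and a few bipartite configurations), reduce to a finite list, finish in GAP --- is also the same as the paper's. But the reduction itself has two genuine problems. First, the step ``$(|\ol{x}|-1)|Z(G)|\le 7$ forces $|Z(G)|\le 3$'' is wrong: when $|Z(G)|\ge4$ you correctly get $\exp(\ol{G})=2$, hence $\ol{G}$ elementary abelian, but that is \emph{not} a contradiction --- $G/Z(G)$ abelian only means $G$ is nilpotent of class $2$. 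Indeed $D_8\times\Z_2$, $Q_8\times\Z_2$ and $\Z_4\rtimes\Z_4$ all have $|Z(G)|=4$, and the paper must (and does) work through this case separately: $G$ is nilpotent, $|Z(G)|\in\{4,6\}$, Lemma \ref{commutingpgroup}(2) forces $|G|=16$ when $|Z(G)|=4$, and $|Z(G)|=6$ leaves only $\Z_3\times D_8$ and $\Z_3\times Q_8$, killed because $\Gamma_G\cong 3K_6\supseteq 2K_5$. You cannot simply discard $|Z(G)|\ge4$.

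Second, and more seriously, the step that is supposed to make the check finite --- ``counting the elements of $\ol{G}$ by order \dots only boundedly many such subgroups may occur \dots bounds $|\ol{G}|$'' --- is not justified and does not follow from forbidden subgraphs alone. A disjoint union $K_6\cup nK_4$ is toroidal and projective for every $n$, so nothing in the surface constraints bounds the number of maximal abelian subgroups of order $\le 4+|Z(G)|$; the bound on $|G|$ must come from group theory. The paper supplies exactly this input: Lemma \ref{commutingpgroup} (an odd-order $p$-group of order $>p$ contains a commuting set of size $p^2-1\ge8$, and a $2$-group of order $\ge32$ contains two disjoint commuting $6$-sets), which bounds every Sylow subgroup; Bolker's classification of groups of exponent dividing $6$ for the $|Z(G)|=3$ case; and normality of the Sylow $7$-subgroup when $7\in\omega(G)$. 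Without these (or equivalent) ingredients your ``finitely many surviving groups'' is not actually established, so the GAP step has no well-defined input. Your closing observation --- that one must separately verify no group produces an essential $K_7$ or $K_{4,4}$, since these separate toroidal from projective --- is a fair description of why the equivalence holds, and the paper indeed settles it only by computing both invariants for the finite list.
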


\textit{Proof of Theorems \ref{planarcommuting} and \ref{toroidalprojectivecommuting}. }
We will show that there are only finitely many groups whose commuting graph have no subgraphs isomorphic to $K_8$ or $2K_5$ and among them, we will cross out those whose commuting graph is not planar, toroidal or projective. We proceed in some steps.

(1) $|Z(G)|\geq8$. Then $xZ(G)$ induces a complete subgraph for each $x\in G\setminus Z(G)$, which is a contradiction. So, we have $|Z(G)|\leq7$.

(2) $|Z(G)|\geq4$. If $\ol{x}\in\ol{G}$ such that $|\ol{x}|>2$, then $xZ(G)\cup x^{-1}Z(G)$ induces a complete subgraph with at least $8$ elements, which is a contradiction. Thus $\ol{G}$ is an elementary abelian $2$-group and hence $G$ is nilpotent. Clearly, $|Z(G)|\neq5,7$. If $|Z(G)|=6$, then $G\cong\Z_3\times H$, where $H$ is an extra special $2$-group. Let $\gen{x}$ be the Sylow $3$-subgroup of $G$. If $A\subseteq H\setminus Z(H)$ is a commuting set, then $\gen{x}\times A$ is a commuting set in $G\setminus Z(G)$. Thus $\Gamma_G$ has a subgraph isomorphic to $K_{3|A|}$. Hence $|A|\leq2$ and this is possible only if $H\cong D_8$ or $Q_8$. Therefore $G\cong\Z_3\times D_8$ or $\Z_3\times Q_8$, which is impossible for $\Gamma_G\cong3K_6$. If $|Z(G)|=4$, then $G$ is a $2$-group and, by Lemma \ref{commutingpgroup}, it follows that $|G|=16$.

(3) $|Z(G)|=3$. If $\ol{x}\in\ol{G}$ is an element of order $>3$, then $xZ(G)\cup x^2Z(G)\cup x^3Z(G)$ induces a complete subgraph isomorphic to $K_9$, which is impossible. Thus $\omega(\ol{G})\subseteq\{1,2,3\}$. With a same argument one can show that $C_G(x)=\gen{Z(G),x}$ for all $x\in G\setminus Z(G)$. Now, we have three cases. If $\ol{G}$ is a $2$-group, then $G$ is abelian, which is a contradiction. Also, if $\ol{G}$ is a $3$-group and $x,y\in G$ are such that $xy\neq yx$, then $xZ(G)\cup x^{-1}Z(G)\cup yZ(G)\cup y^{-1}Z(G)$ induces a subgraph isomorphic to $2K_6$, which is a contardiction. Therefore, $\ol{G}$ is neither a $2$-group nor a $3$-group. Then, by \cite{edb}, either $\ol{G}\cong(\Z_2\times\Z_2)^m\rtimes\Z_3$ or $\ol{G}\cong\Z_3^m\rtimes\Z_2$. If $\ol{G}\cong(\Z_2\times\Z_2)^m\rtimes\Z_3$, then $Z(G)S_2(G)\setminus Z(G)$ induces a complete subgraph with at least $9$ elements, which is a contradiction. Thus $\ol{G}\cong\Z_3^m\rtimes\Z_2$. By previous arguments, $S_3(G)$ must be abelian, which implies that $|S_3(G)\setminus Z(G)|\leq7$. Hence, $|S_3(G)|=9$ and so $|G|=18$. 

(4) $|Z(G)|=2$. If there is an element $\ol{x}\in \ol{G}$ with $|\ol{x}|\geq 5$, then  $xZ(G)\cup x^2Z(G)\cup x^3Z(G)\cup x^4Z(G)$ induces a subgraph isomorphic to $K_8$, which is impossible. Therefore, $\omega(\ol{G})\subseteq \{1,2,3,4\}$. Since $Z(G)\subseteq S_2(G)$, by Lemma \ref{commutingpgroup}, $|G|\big|2^4\cdot3$.

(5) $|Z(G)|=1$. Clearly, $\omega(G)\subseteq\{1,2,3,4,5,6,7,8\}$. By Lemma \ref{commutingpgroup}, $|G|\big|2^4\cdot3\cdot5\cdot7$. Also, if $7\in\omega(G)$, then $S_7(G)\normal G$, which implies that $|G|\big|48$.

Now, the result follows by a simple computation with GAP \cite{tgg}. The converse is straightforward.$\hfill\Box$
\section{Non-commuting graphs}
In this section, we shall determine all finite non-abelian groups whose non-commuting graphs can be embedded in the plane, torus or projective plane. The following theorem of Abdollahi, Akbari and Maimani gives all planar non-commuting graphs.
\begin{theorem}[\cite{aa-sa-hrm}]
Let $G$ be a finite non-abelian group. Then $\Gamma'_G$ is planar if and only if $G$ is isomorphic to one of the groups $S_3$, $D_8$ or $Q_8$.
\end{theorem}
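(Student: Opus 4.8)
\emph{Proof proposal.} The plan is to prove both implications by elementary means, using only the genus bound of Theorem~\ref{genusbound} and the structure of small non-abelian groups. For the ``if'' direction I would simply describe the three graphs. When $G=S_3$ the centre is trivial, the three transpositions form a triangle, each transposition is joined to both $3$-cycles, and the two $3$-cycles are non-adjacent; hence $\Gamma'_{S_3}$ is $K_5$ with one edge deleted, which is planar. When $G\in\{D_8,Q_8\}$ we have $|Z(G)|=2$ and $|C_G(x)|=4$ for every non-central $x$, so each of the six vertices of $\Gamma'_G$ has exactly one non-neighbour other than itself; thus $\Gamma'_G$ is the complement of a perfect matching $3K_2$ on six vertices, i.e.\ the octahedron $K_{2,2,2}$, which is planar.

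For the ``only if'' direction the key point is a degree bound. For a non-central $x$, the centralizer $C_G(x)$ is a \emph{proper} subgroup of $G$, so $|C_G(x)|\le|G|/2$; since $Z(G)\subseteq C_G(x)$, the vertex $x$ is adjacent in $\Gamma'_G$ to every element lying outside $C_G(x)$, whence $\deg_{\Gamma'_G}(x)=|G|-|C_G(x)|\ge|G|/2$. Write $n=|G|$, $z=|Z(G)|$ and $v=n-z$ for the number of vertices. Summing the degrees gives $e\ge vn/4$. Since $\ol G$ is non-cyclic we have $|\ol G|\ge4$, hence $z\le n/4$ and $v\ge 3n/4\ge5$; in particular $\Gamma'_G$ is a simple graph on at least three vertices, so if it were planar then Theorem~\ref{genusbound} with $\gamma=0$ would force $e\le 3v-6$. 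Combining the two inequalities yields $vn/4\le 3v-6$, i.e.\ $v(n-12)\le-24<0$, so $n\le11$.

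It then remains to inspect the non-abelian groups of order at most $11$, namely $S_3$, $D_8$, $Q_8$ and $D_{10}$. For $D_{10}$ the five reflections pairwise fail to commute (the product of two distinct reflections is a rotation whose order divides $5$, so it equals its own inverse only if it is trivial), hence $\Gamma'_{D_{10}}$ contains a copy of $K_5$ and, by Theorem~\ref{genus}(1), is not planar. This leaves exactly $S_3$, $D_8$ and $Q_8$, which completes the classification.

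The main obstacle I anticipate is not any single computation but making the degree bound airtight: one must verify that every non-central element really has degree $\ge|G|/2$ in $\Gamma'_G$ (which relies on $C_G(x)$ being a proper subgroup containing $Z(G)$) and that $v\ge3$, so that the planarity inequality $e\le 3v-6$ is genuinely available. Once these two points are in place the problem collapses to the short finite check above.
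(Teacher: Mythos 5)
Your proof is correct, but note that the paper does not actually prove this statement: it is imported verbatim from Abdollahi--Akbari--Maimani \cite{aa-sa-hrm}, so there is no internal argument to compare against. Your self-contained derivation is sound at every step. The ``if'' direction is right: $\Gamma'_{S_3}$ is $K_5$ minus an edge, and for $D_8$ and $Q_8$ each proper centralizer has order $4$, so $\Gamma'_G$ is the octahedron $K_{2,2,2}$; both are planar. For the ``only if'' direction, the degree bound $\deg_{\Gamma'_G}(x)=|G|-|C_G(x)|\ge |G|/2$ (valid because $C_G(x)$ is proper for non-central $x$), together with $e\le 3v-6$, does give $v(n-12)\le -24$ and hence $n\le 11$; the finite check then reduces to $S_3$, $D_8$, $Q_8$, $D_{10}$, and your observation that the five reflections of $D_{10}$ induce a $K_5$ eliminates the last case. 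One cosmetic point: you justify $e\le 3v-6$ by citing Theorem~\ref{genusbound}, whose hypothesis is $v\ge 4$ rather than $v\ge 3$ as you write; since you have already shown $v\ge 5$, the hypothesis is satisfied and nothing breaks, but the sentence should say $v\ge 4$ (or simply invoke the classical Euler-formula bound for planar graphs directly). Your edge-counting route is essentially the same spirit as the original argument of \cite{aa-sa-hrm}, which uses the existence of a vertex of degree at most $5$ in a planar graph against the same lower bound $\deg(x)\ge|G|/2$; that version reaches $|G|\le 10$ slightly faster, but both collapse to the same finite check.
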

\begin{theorem}
There is no toroidal non-commuting graph.
\end{theorem}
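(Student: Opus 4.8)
The plan is to show that if $\Gamma'_G$ is toroidal, i.e. $\gamma(\Gamma'_G) = 1$, then a contradiction arises, so that no such group exists. The key leverage is that the non-commuting graph, being the complement of the commuting graph, is typically very dense, so it contains large complete bipartite or complete subgraphs forcing the genus well above $1$.

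First I would recall the structure of $\Gamma'_G$: its vertex set is $G \setminus Z(G)$, which has size $n := |G| - |Z(G)|$, and two vertices are adjacent precisely when they fail to commute. A central observation is that for any $x \in G \setminus Z(G)$, the centralizer $C_G(x)$ is a proper subgroup containing $\langle Z(G), x\rangle$, so $|C_G(x)| \le |G|/2$; hence every vertex $x$ is \emph{non}-adjacent only to the other elements of $C_G(x) \setminus Z(G)$, of which there are at most $|G|/2 - |Z(G)|$. Consequently $\deg_{\Gamma'_G}(x) \ge n - (|G|/2 - |Z(G)|) = |G|/2$. So $\Gamma'_G$ has at least $\tfrac{1}{2}\cdot n \cdot \tfrac{|G|}{2}$ edges on $n$ vertices. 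Feeding this into Theorem~\ref{genusbound}, namely $\gamma(\Gamma) \ge \lceil \tfrac{1}{6}(e - 3v) + 1\rceil$, one sees that $\gamma(\Gamma'_G) = 1$ forces $e \le 3n$, i.e. the average degree is at most $6$; combined with the minimum-degree bound $|G|/2 \le 6$ this already restricts to $|G| \le 12$. That leaves only finitely many groups, all of small order, and for each non-abelian $G$ with $|G| \le 12$ one computes $\Gamma'_G$ explicitly.

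The finishing step is to observe that in fact \emph{every} non-abelian group with toroidal or projective non-commuting graph is already ruled out: for $G \in \{S_3, D_8, Q_8\}$ the non-commuting graph is planar by the theorem of Abdollahi–Akbari–Maimani quoted above (so genus $0$, not $1$), and for the remaining non-abelian groups of order at most $12$ — namely $D_{10}$, $A_4$, $D_{12}$, $\mathrm{Dic}_3 = \Z_3 \rtimes \Z_4$ — one checks that $\Gamma'_G$ contains a subgraph forcing genus $\ge 2$. For instance, in $D_{12}$ the three pairs of non-central rotations together with the six reflections give a large complete multipartite subgraph, and in $A_4$ the non-commuting graph already contains $K_{3,4}$ or better, whose genus exceeds $1$ by Theorem~\ref{genus}(2). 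More uniformly, one can note that a non-abelian $G$ with $|Z(G)|=1$ has $\Gamma'_G \supseteq K_{|C_G(x)|-1,\, |G|-|C_G(x)|}$ for a suitable $x$, and checking the parameters for each such $G$ of order $\le 12$ pushes the genus past $1$.

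The main obstacle is not conceptual but bookkeeping: one must be careful that the crude counting bound $\gamma(\Gamma'_G) \ge \lceil \tfrac{1}{6}(e - 3v) + 1\rceil$ is strong enough \emph{by itself}, and indeed since the minimum degree is at least $|G|/2$ we get $e \ge |G| \cdot n / 4$, so $\tfrac{1}{6}(e - 3n) \ge \tfrac{1}{6}n(|G|/4 - 3)$, which is $\ge 1$ as soon as $|G| \ge 13$ (using $n \ge |G|/2 \ge 7$). Thus the only real work left is the handful of groups of order $\le 12$, and for those the planarity theorem plus a direct subgraph exhibition (a suitable $K_5$ or $K_{3,3}$-subdivision is too weak — one needs an explicit $K_6$, $K_{3,4}$, or $2K_5$ to exceed genus $1$) completes the argument. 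One should double-check the boundary case $|G| = 12$ with $Z(G)$ trivial (i.e. $A_4$ and $D_{12}$, noting $D_{12}$ has $Z = \Z_2$) by hand rather than relying on the asymptotic estimate.
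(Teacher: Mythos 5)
Your argument reaches the correct conclusion and shares its skeleton with the paper's proof --- both reduce to finitely many groups by applying Theorem~\ref{genusbound} to the (necessarily dense) non-commuting graph, then dispose of the small cases --- but the way you bound the edge count is genuinely different. The paper computes $2|E(\Gamma'_G)|=|G|^2-|G|k(G)$ exactly, deduces $k(G)\geq|G|-5$ from toroidality, and then invokes Gustafson's theorem $k(G)/|G|\leq 5/8$ to force $|G|\leq 13$; the surviving groups are then precisely $S_3$, $D_8$, $Q_8$ (the only non-abelian groups of order at most $13$ with $k(G)\geq|G|-5$), all planar. You instead use $|C_G(x)|\leq|G|/2$ to get minimum degree at least $|G|/2$ in $\Gamma'_G$, which together with $e\leq 3v$ gives $|G|\leq 12$ with no appeal to Gustafson's theorem --- a more elementary reduction. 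The price is a longer endgame: the degree bound does not eliminate $D_{10}$, $A_4$, $D_{12}$ and $\Z_3\rtimes\Z_4$ the way the inequality $k(G)\geq|G|-5$ does, so you must exhibit genus-$\geq 2$ subgraphs in each of these four groups by hand.

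In that final check there is one genuine slip: you claim $K_{3,4}$ has genus exceeding $1$, but by Theorem~\ref{genus}(2) we have $\gamma(K_{3,4})=\lceil\frac{1}{4}\cdot1\cdot2\rceil=1$, so finding $K_{3,4}$ in $\Gamma'_{A_4}$ proves nothing. The fix is immediate: the three involutions of $A_4$ pairwise commute and none commutes with any of the eight $3$-cycles, so $\Gamma'_{A_4}$ contains $K_{3,8}$ with $\gamma(K_{3,8})=2$. Likewise $D_{12}$ has four (not six) non-central rotations, which against the six reflections give $K_{4,6}$ of genus $2$; $D_{10}$ gives $K_{4,5}$ and $\Z_3\rtimes\Z_4$ gives $K_{4,6}$, both of genus $2$. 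With these witnesses substituted, your proof is complete and correct.
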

\begin{proof}
Assume on a contrary that $G$ is a finite group with toroidal non-commuting graph. Let $k(G)$ be the number of conjugacy classes of $G$. Since $|V(\Gamma'_G)|=|G|-|Z(G)|$ and 
\begin{align*}
2|E(\Gamma'_G)|&=|G|^2-|\{(x,y)\in G\times G:xy=yx\}|\\
&=|G|^2-|G|k(G),
\end{align*}
by Theorem \ref{genusbound}, it follows that $|G|(|G|-k(G)-6)+6|Z(G)|\leq 0$. Hence $k(G)\geq|G|-5$. On the other hand, $k(G)/|G|\leq5/8$ (see \cite{whg}), from which it follows that $|G|\leq13$. A simple verification shows that $S_3$, $D_8$ and $Q_8$ are the only groups with these properties each of which has a planar non-commuting graph, a contradiction.
\end{proof}
\begin{theorem}
There is no projective non-commuting graph.
\end{theorem}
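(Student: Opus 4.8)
The plan is to mimic the proof of the preceding theorem (non-existence of toroidal non-commuting graphs), replacing the genus bound by the crosscap obstruction. A projective graph embeds in $N_1$, hence also in $N_2$; so if $\Gamma'_G$ were projective it would in particular embed in $N_2$, and a fortiori it could contain no subgraph isomorphic to $2K_5$, since $2K_5$ is not projective (indeed not embeddable in $N_2$) by the result of \cite{hg-ph-cw} quoted in the introduction. The strategy is therefore: first show that any group whose non-commuting graph is projective must be extremely small, and then eliminate the surviving candidates by hand (they turn out to be exactly $S_3$, $D_8$, $Q_8$, whose non-commuting graphs are planar, not projective).

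First I would bound $|G|$ via an Euler-characteristic estimate. There is a standard analogue of Theorem \ref{genusbound} for the non-orientable case: a simple graph with $v\ge 3$ vertices and $e$ edges satisfies $\ol{\gamma}(\Gamma)\ge\lceil\frac{1}{3}(e-3v)+2\rceil$ (this follows from Euler's formula $v-e+f=2-\ol{\gamma}$ together with $2e\ge 3f$, exactly as in the orientable case but with the constant $6$ replaced by $3$). Applying this to $\Gamma'_G$, whose vertex count is $|G|-|Z(G)|$ and whose edge count satisfies $2|E(\Gamma'_G)|=|G|^2-|G|k(G)$ as computed in the previous proof, the inequality $\ol{\gamma}(\Gamma'_G)\le 1$ forces $|G|^2-|G|k(G)-6(|G|-|Z(G)|)\le 0$, i.e. $k(G)\ge|G|-6+6|Z(G)|/|G|$, so in particular $k(G)\ge|G|-5$ (as $|Z(G)|\ge 1$). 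Combined with the bound $k(G)/|G|\le 5/8$ of \cite{whg}, this gives $|G|\le 13$ just as before.

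Then I would finish exactly as in the toroidal case: among non-abelian groups of order at most $13$ the only ones are $S_3$, $D_8$ and $Q_8$, and each of these has a planar non-commuting graph by the theorem of Abdollahi, Akbari and Maimani quoted above; a planar graph is never projective (a projective graph has $\ol{\gamma}=1\ne 0$), so none of these three is a counterexample. Hence no finite non-abelian group has a projective non-commuting graph.

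The only real subtlety is which crosscap inequality to invoke. If one prefers to quote only results stated in the excerpt, one can instead argue that $\Gamma'_G$ projective implies $\Gamma'_G$ embeds in $N_2$ and hence contains no $2K_5$; a group with no two non-central elements $x,y$ with $xy\ne yx$ and two further non-central elements $z,w$ with $zw\ne wz$ and all of $x,y,z,w$ pairwise distinct... — but this combinatorial route is messier, so the cleanest write-up uses the non-orientable Euler bound above, whose short derivation I would include inline. I expect the main obstacle to be purely expository: making sure the non-orientable analogue of Theorem \ref{genusbound} is stated and justified in one line (via $v-e+f=2-\ol\gamma$ and $3f\le 2e$), since the paper does not record it explicitly; once that is in place the argument is a verbatim copy of the preceding proof.
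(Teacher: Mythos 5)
Your argument is correct, but it is a genuinely different route from the paper's. The paper does not reuse the Euler-characteristic counting from the toroidal case; instead it runs a structural analysis entirely through forbidden complete bipartite subgraphs and Theorem \ref{crosscap}: it bounds $|Z(G)|\le 3$ via $K_{|Z(G)|,|Z(G)|}$ on two non-commuting cosets, bounds element orders via $K_{\varphi(|x|),|x|}$ on $X\cup\gen{x}y$, concludes $\omega(\ol{G})\subseteq\{1,2,3\}$, and then splits into the cases where $G$ is a $3$-group, an extraspecial $2$-group (central product of copies of $D_8$ and $Q_8$), or has trivial center with $|G|\mid 24$, finally eliminating $S_3$, $A_4$, $S_4$ by hand. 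Your approach -- the non-orientable analogue $e\le 3v-3\chi(N_k)$, i.e.\ $\ol{\gamma}(\Gamma)\ge\frac{1}{3}(e-3v)+2$, combined with $2|E(\Gamma'_G)|=|G|^2-|G|k(G)$ and Gustafson's $k(G)/|G|\le 5/8$ -- is shorter and makes the toroidal and projective cases genuinely parallel; the inequality you need is standard and safe even for non-cellular embeddings, since $v-e+f\ge\chi(S)$ holds for every embedding and $3f\le 2e$ holds for simple graphs with a cycle, so the only cost is stating it, as the paper's Theorem \ref{genusbound} covers only the orientable case. One small correction to your write-up: it is not true that $S_3$, $D_8$, $Q_8$ are the only non-abelian groups of order at most $13$ (there are also $D_{10}$, $A_4$, $D_{12}$ and $\Z_3\rtimes\Z_4$); what is true, and what the toroidal proof actually uses, is that they are the only ones additionally satisfying $k(G)\ge|G|-5$ (e.g.\ $A_4$ has $k=4<7$). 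With that phrasing fixed, your proof is complete: the three survivors have planar, hence crosscap-zero, non-commuting graphs, so none is projective.
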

\begin{proof}
Suppose on the contrary that $G$ is a finite group with projective non-commuting graph. If $x,y\in G$ are such that $xy\neq yx$, then the subgraph induced by $xZ(G)\cup yZ(G)$ is isomorphic to $K_{|Z(G)|,|Z(G)|}$, which implies that $|Z(G)|\leq3$. On the other hand, if $x\in G\setminus Z(G)$, $y\in G\setminus C_G(x)$ and $X$ is the set of all generators of $\gen{x}$, then the subgraph induced by $X\cup\gen{x}y$ is isomorphic to $K_{\varphi(|x|),|x|}$, where $\varphi$ is the Euler totient function, from which it follows that $|x|\leq4$ or $|x|=6$. If $|x|=6$ then there exists a suitable power $x^*$ of $x$ such that $x^*\in G\setminus C_G(y)$ and the subgraph induced by $\{x,x^{-1},x^*\}\cup\gen{x}y$ is isomorphic to $K_{3,6}$, which is a contradiction. Therefore, $\omega(G)\subseteq\{1,2,3,4\}$. On the other hand, if $x\in G$ such that $|\ol{x}|=4$, then the subgraph induced by $\{x,x^{-1},x^2\}\cup(G\setminus C_G(x^2))$ has a subgraph isomorphic to $K_{3,|G\setminus C_G(x^2)|}$, which implies that $|G\setminus C_G(x^2)|\leq4$. Hence, $|G|=8$ and consequently $\exp(\ol{G})=2$, which is a contradiction. Therefore, $\omega(\ol{G})\subseteq\{1,2,3\}$. Since $G$ has no elements of order $6$, it follows that $G$ is a $3$-group, $G$ is a $2$-group or $Z(G)=1$. Thus, we have the following cases:

Case 1. $G$ is a $3$-group. If $x\in G\setminus Z(G)$ and $y\in G\setminus C_G(x)$, then the subgraph induced by $xZ(G) \cup x^{-1}Z(G)\cup yZ(G)$ is isomorphic to $K_{3,6}$, which is a contradiction.

Case 2. $G$ is a $2$-group. Then $|Z(G)|=\exp(\ol{G})=2$, which implies that $G$ is an extra special $2$-group. So, $G=G_1\circ\dots\circ G_n$ is the central product of $G_1,\ldots,G_n$, where $G_i\cong D_8$ or $Q_8$, for $i=1,\dots,n$. Let $x,y \in G_1$ with $xy\neq yx$. If $n>1$, then the subgraph induced by $xG_2 \cup yG_2$ is isomorphic to $K_{8,8}$, which is impossible. Thus $n=1$ and subsequently $G\cong D_8$ or $Q_8$, a contradiction.

Case 3. $|Z(G)|=1$. Let $P=S_2(G)$, $Q=S_3(G)$ and $x,y\in G$ be elements of orders $2$ and $3$, respectively. By Case 2, either $P$ is abelian, or $P\cong D_8$ or $Q_8$. If $P$ is abelian, then the subgraph induced by $(P\setminus\{1\})\cup Py$ is isomorphic to $K_{|P|-1,|P|}$, which implies that $|P|\leq4$. Hence, $|P|\big|8$ in all cases. On the other hand, $Q$ is abelian, which implies that the subgraph induced by $(Q\setminus\{1\})\cup Qx$ is isomorphic to $K_{|Q|-1,|Q|}$. So, we have $|Q|=3$. Therefore  $|G|\big|24$. The only groups with these properties are $S_3$, $A_4$ and $S_4$ each of which has a non-projective non-commuting graph. The proof is complete.
\end{proof}

\noindent{\bf Acknowledgments.}
The authors are deeply grateful to the referee for careful reading of the manuscript and helpful suggestions. \\[0.1cm]

\end{document}